\newtheorem{thm}{Theorem}[section]
\newtheorem{lem}{Lemma}[section]
\theoremstyle{definition}
\newtheorem{definition}{Definition}[section]
\newtheorem{problem}{Problem}
\theoremstyle{remark}
\newtheorem{rem}{Remark}[section]
\numberwithin{equation}{section}
\numberwithin{equation}{section}
\newcounter{saveeqn}
\newcommand{\eqnref}[1]{(\ref {#1})}
\title[Simultaneous recovery of flux and thickness]{\bf Simultaneous recovery of surface heat flux and thickness of a solid structure by ultrasonic measurements }
\author{Youjun Deng}
\address{School of Mathematics and Statistics, Central South University, Changsha, Hunan, China}
\email{youjundeng@csu.edu.cn; dengyijun\_001@163.com}
\author{Hongyu Liu}
\address{Department of Mathematics, City University of Hong Kong, Kowloon, Hong Kong, China}
\email{hongyu.liuip@gmail.com, hongyliu@cityu.edu.hk}
\author{Xianchao Wang}
\address{Department of Mathematics, City University of Hong Kong, Kowloon, Hong Kong, China}
\email{xcwang90@gmail.com}
\author{Dong Wei}
\address{State Key Laboratory of Aerodynamics, China Aerodynamics Research and Development Center,Mianyang,Sichuan,621000,China\\
Center of Nondestructive Examination, China Special Equipment Inspection and Research Institute, Beijing, 100029, China}
\email{xisuzisi@126.com}
\author{Liyan Zhu}
\address{School of Mathematics and Statistics, Central South University, Changsha, Hunan, China}
\date{} 
\begin{document}
\maketitle

\begin{abstract}

This paper is concerned with a practical inverse problem of simultaneously reconstructing the surface heat flux and the thickness of a solid structure from the associated ultrasonic measurements. In a thermoacoustic coupling model, the thermal boundary condition and the thickness of a solid structure are both unknown, while the measurements of the propagation time by ultrasonic sensors are given. We reformulate the inverse problem as a PDE-constrained optimization problem by constructing a proper objective functional. We then develop an alternating iteration scheme which combines the conjugate gradient method and the deepest decent method to solve the optimization problem. Rigorous convergence analysis is provided for the proposed numerical scheme. By using experimental real data from the lab, we conduct extensive numerical experiments to verify several promising features of the newly developed method.

\medskip

\medskip

\noindent{\bf Keywords:}~~thermoacoustic coupling, inverse problem, heat flux and thickness, ultrasonic measurements, adjoint state method, convergence analysis


\end{abstract}

\section{Introduction}\label{sect:1}


 The heat conduction is a ubiquitous phenomenon which forms the basis for many practical applications. Given the geometrical and material configurations of a material structure as well as the heat source including the initial and boundary temperature distributions, finding the temperature distribution as well as its change on the material structure constitutes the so-called direct or forward heat conduction problem. In many practical applications, one is more interested in the so-called inverse heat conduction problem which reverses the above forward problem through direct or indirect measurement data; see e.g. \cite{MSS, SSWBH, HGF, HHSW,MKPR, AOM, OMWYZ, YDY, WYYX, RBSK, FHLZ,AHLC} and the references cited therein for some related studies in the literature.

In this paper, motivated by practical applications (with experimental real data from the lab), we are mainly concerned with the reconstruction of the surface heat flux and the thickness of a solid structure by using the associated ultrasonic measurements. The reconstruction of the surface heat flux is one of the most typical inverse heat conduction problems, and is widely encountered in aerospace, nuclear physics, metallurgy, and other industrial fields; see \cite{HHSW,MKPR} and the references cited therein for more related discussions. Ultrasonic thickness measurement is a commonly used non-destructive testing method, and is widely used in petroleum, machinery, ship, chemical industry and other fields \cite{RBSK,FHLZ}. For most of existing results in the literature, one either recovers the surface heat flux by assuming the thickness of the material structure is a priori known, or recovers the thickness of the material structure by assuming the surface heat flux is a priori known. However, it is a more practical scenario that both the surface heat flux and the thickness of the material structure are unknown and one recovers both of them.

In this paper, based on the ultrasonic echo method and the inverse analysis method of the heat conduction, we propose a novel scheme for simultaneously recovering the surface heat flux and the thickness of the material structure through the pulse-echo measurements by the ultrasonic probe. The study is posed as an inverse problem associated with a thermoacoustic coupling model. We recast the inverse problem as a PDE (partial differential equation)-constrained optimization problem by constructing a proper objective functional. We then develop an alternating iteration scheme which combines the conjugate gradient method and the deepest decent method to solve the optimization problem. Rigours convergence analysis is provided for the proposed numerical scheme. Finally, by using experimental real data from the lab, we conduct extensive numerical experiments to verify effectiveness and efficiency of the method.

The rest of this paper is organized as follows. In Section 2, we present the mathematical formulation of the direct and inverse problems for our study and also briefly discuss the corresponding physical setup. In Section 3, we give the optimization formulation of the inverse problem and then derive the alternating iteration scheme for solving the optimization problem. Sections 4 and 5 are, respectively, devoted to the theoretical convergence analysis and numerical experiments.

\section{Mathematical and physical setups}

The physical principle of the ultrasonic thickness measurement is to use the propagation time of the ultrasonic waves in the medium to infer the thickness of the underlying solid structure. The propagation time is mainly determined by the thickness, material properties, and internal temperature field of the solid structure; see Figure~\ref{fig.Model} for a schematic illustration of the physical setup. The propagation time of the ultrasonic wave in the solid structure can be expressed as (see \cite{WSSG17}):
\begin{figure}[h]
\centering
\includegraphics[width=0.7\textwidth]{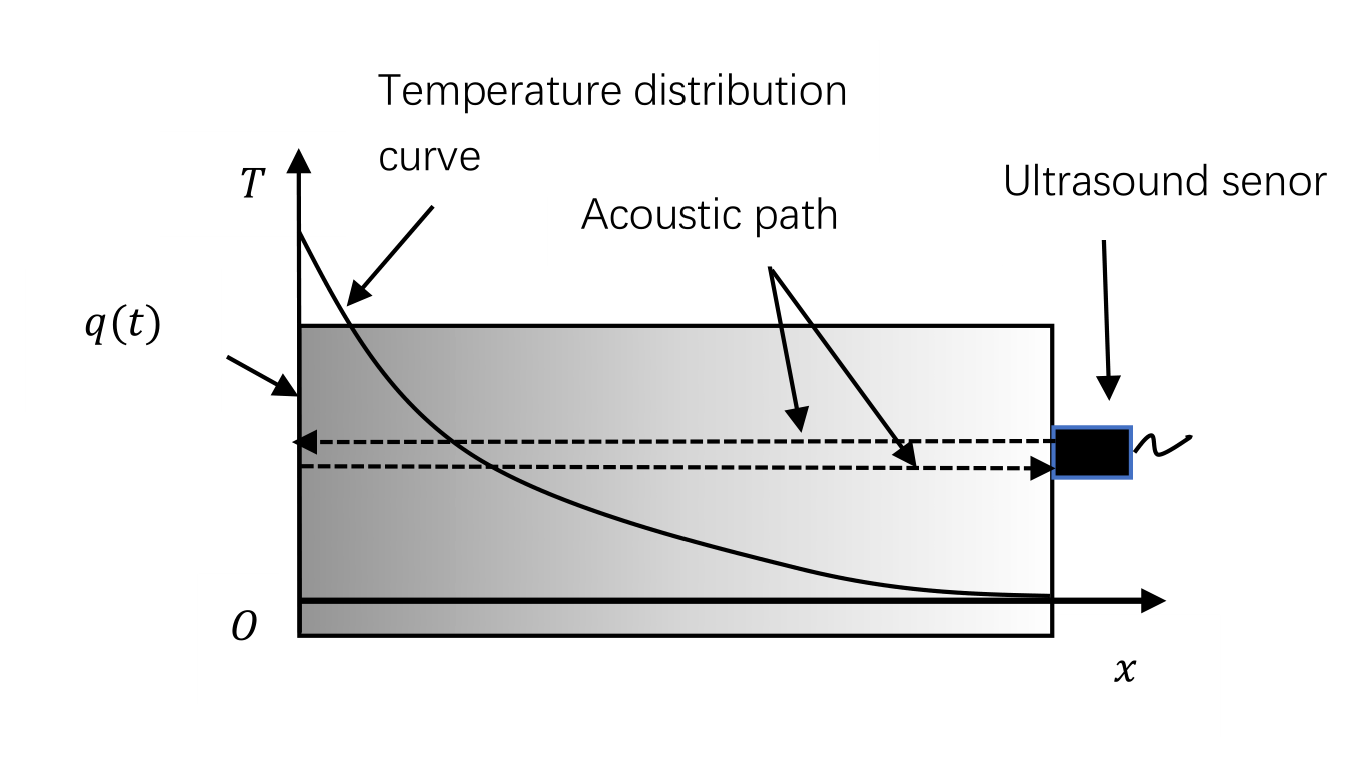}
\caption{ A one-dimensional model based on ultrasonic detection.} \label{fig.Model}
\end{figure}
\begin{equation}\label{eq:L1}
\Lambda_L(t)=2\int_{0}^{L}\frac{1}{V(T(x,t))}\,\mathrm{d}x,\quad t\in(0,+\infty),
\end{equation}
where $L\in \mathbb{R}_{+}$ denotes the unidirectional propagation distance of the ultrasonic wave in the medium, i.e. the thickness of the material structure being under detection. Here, $V$ is the propagation velocity of the acoustic wave in the solid medium and is related to the material properties and the structure temperature. Usually, it has an approximately linear relationship with the temperature, i.e.,
\begin{equation}\label{eq:velocity}
V(T)=aT+b, \quad a,\, b \in \mathbb{R},
\end{equation}
where $a$ and $b$ are determined by the properties of the material and calibrated by experiments. $T(x,t)$ is the internal temperature of the structure, which satisfies the following heat conduction system for $T\in W_2^{2,1}\left(\left[0,L \right]\times(0, \tau)\right)$:
\begin{equation}\label{eq:master}
\left\{
\begin{split}
&\rho c\frac{\partial T}{\partial t}=\frac{\partial }{\partial x}\left ( k\frac{\partial T}{\partial x} \right),\qquad\qquad \qquad\qquad \ \, (x,t)\in [0,L]\times[0,\tau],\\
&-k\frac{\partial T}{\partial x}\Big|_{x=0}=q, \quad -k\frac{\partial T}{\partial x}\Big|_{x=L}=0,\qquad\quad q(t)\in L^{2}\left[0,\tau \right],\\
&T\mid_{t=0}=T_{0},\qquad\qquad\qquad\qquad\qquad\qquad\quad x\in [0,L],
\end{split}
\right.
\end{equation}
where $k(x,t)$, $c(x,t)$ and $\rho(x)$ are the thermal conductivity, specific heat capacity and density of the material, respectively, and $q(t)$ denotes the heat flux density on the boundary.

In this paper, the inverse problem that we are concerned with is described as follows:
\begin{problem}\label{problem}
Given the measured propagation time of the ultrasonic wave $\Lambda_m(t)$ and the measured boundary temperature $T_m(L,t)$, determine the surface heat flux $q(t)$ and the thickness $L$, i.e.,
\begin{equation}\label{eq:ip1}
\left\{\Lambda_{m}\left(t\right),T_m(L,t)\right\}\rightarrow\{q\left(t\right),L\},\quad t\in \left[0,\tau\right].
\end{equation}
\end{problem}

We would like to point out that in the measured data, the temperature at the end of the solid structure can be measured. However the thickness $L$ of the structure is unknown in the practical application of our interest. It can be directly verified that the inverse problem \eqref{eq:ip1} is nonlinear.


%
\section{An alternating iteration scheme for the inverse problem}

In this section, we first recast the inverse problem \eqref{eq:ip1} as an optimization problem by following the general framework of Tikhonov regularization for inverse problems; see e.g. \cite{DYL} and the references cited therein. Then we present the newly proposed alternating iteration scheme. To that end, we introduce the following objective functional with respect to the unknown heat flux $q(t)$ and thickness $L$:
\begin{equation}\label{eq:functional}
J\left(q,L\right)=\frac{1}{2}\int_{0}^{\tau}\left(\Lambda_L\left(t\right)-\Lambda_m\left(t\right) \right)^2\mathrm{d}t +\frac{\alpha}{2}\int_{0}^{\tau}\left(T(L,t)-T_m(L,t)\right)^2\mathrm{d}t,
\end{equation}
where $\alpha\in\mathbb{R}_+$ signifies a regularization parameter. We then recast the inverse problem \eqref{eq:ip1} as the following PDE-constrained optimization problem:
\begin{equation}\label{eq:op1}
\min_{q\in L^2(0, \tau),\, L\in\mathbb{R}_+} J\left(q, L\right)\ \ \mbox{subject to} \ T\in W_2^{2,1}\left(\left[0,L \right]\times(0, \tau)\right)  \  \mbox{satisfying} \quad \eqnref{eq:master}.
\end{equation}
We next convert the constrained optimization problem into an unconstrained one by using the Lagrange multiplier method. Using the heat conduction equation \eqref{eq:master}, with the boundary conditions and the initial condition, the augmented functional is given as follows:
\begin{equation}\label{eq:lagrangian}
\begin{aligned}
J\left(q,L\right)=&\frac{1}{2} \int_{0}^{\tau}\left(\Lambda_L\left(t\right)-\Lambda_m\left(t\right) \right)^2\mathrm{d}t+\frac{\alpha}{2} \int_{0}^{\tau}\left(T(L,t)-T_m(L,t)\right)^2\mathrm{d}t\\
&- \int_{0}^{L} \int_{0}^{\tau}\left\{\rho c\frac{\partial T(x,t)}{\partial t}-\frac{\partial }{\partial x}\left ( k\frac{\partial T(x,t)}{\partial x} \right )\right\} \lambda_1(x,t)
\,\mathrm{d}t \mathrm{d}x\\
&+ \int_{0}^{\tau}\left( k\frac{\partial T(x,t)}{\partial x}+q(t) \right)\bigg |_{x=0} \lambda_2(t)\, \mathrm{d}t
+\int_{0}^{\tau}\left(k\frac{\partial T(x,t)}{\partial x}\right) \bigg |_{x=L} \lambda_3(t)\, \mathrm{d}t\\
&-\int_{0}^{L} \bigg(T(x,t)-T_0(x,t)\bigg)\bigg|_{t=0} \ \lambda_4(x) \, \mathrm{d}x.
\end{aligned}
\end{equation}
where $\lambda_1(x,t)$, $\lambda_2(t)$, $\lambda_3(t)$ and $\lambda_4(x)$ denote the Lagrange multipliers.

\subsection{Gradients with respect to the parameters}
To solve the optimization problem \eqref{eq:lagrangian}, the gradients of the objective functional $J$ with respective to $q$ and $L$ are required. However,
it is difficult to solve the gradients directly. Thus, we refer to \cite{ARK} and use the adjoint state method to derive them. Noting that
\begin{equation*}
\begin{aligned}
&\int_{0}^{L} \int_{0}^{\tau} \rho c \frac{\partial T(x,t)}{\partial t} \lambda_1(x,t) \,\mathrm{d}t \mathrm{d}x\\
&\quad =-\int_{0}^{L} \int_{0}^{\tau} \rho c \frac{\partial \lambda_1(x,t)}{\partial t} T(x,t) \,\mathrm{d}t \mathrm{d}x
+\int_{0}^{L} \bigg(\rho c\lambda_1(x,t) T\left(x,t\right)\bigg)\bigg |_{0}^{\tau}\,\mathrm{d}x.
\end{aligned}
\end{equation*}
Similarly, one can deduce that
\begin{equation*}
\begin{aligned}
&\int_{0}^{L} \int_{0}^{\tau} \frac{\partial }{\partial x}\left ( k\frac{\partial T(x,t)}{\partial x} \right ) \lambda_1(x,t)
\,\mathrm{d}t \mathrm{d}x
=\int_{0}^{L} \int_{0}^{\tau} \frac{\partial }{\partial x}\left ( k\frac{\partial \lambda_1(x,t)}{\partial x} \right ) T(x,t) \,\mathrm{d}t \mathrm{d}x\\
&\qquad\qquad\qquad+\int_{0}^{\tau} \left(k\lambda_1(x,t)\frac{\partial T(x,t)}{\partial x}\right)\bigg |_{0}^L\,\mathrm{d}t\quad - \int_{0}^{\tau} \left(k\frac{\partial \lambda_1(x,t)}{\partial x}T(x,t)\right)\bigg |_{0}^L\,\mathrm{d}t.
\end{aligned}
\end{equation*}
Finally, the equation \eqref{eq:lagrangian} can be rewritten as
\begin{equation*}\label{eq:lagrangian2}
\begin{aligned}
J(q,L)=&\frac{1}{2}\int_{0}^{\tau}\left(\Lambda_L\left(t\right)-\Lambda_m\left(t\right) \right)^2\mathrm{d}t+\frac{\alpha}{2} \int_{0}^{\tau}\left(T(L,t)-T_m(L,t)\right)^2\mathrm{d}t\\
&+\int_{0}^{L} \int_{0}^{\tau}\left\{\rho c\frac{\partial \lambda_1(x,t)}{\partial t}+ \frac{\partial }{\partial x}\left ( k\frac{\partial \lambda_1(x,t)}{\partial x} \right )\right\} T(x,t)
\,\mathrm{d}t \mathrm{d}x\\
&+\int_{0}^{\tau} \left(k\lambda_1(x,t)\frac{\partial T(x,t)}{\partial x}\right)\bigg |_{0}^L\,\mathrm{d}t
-\int_{0}^{\tau} \left(k\frac{\partial \lambda_1(x,t)}{\partial x}T(x,t)\right)\bigg |_{0}^L\,\mathrm{d}t\\
&-\int_{0}^{L} \bigg(\rho c\lambda_1(x,t) T(x,t)\bigg)\bigg |_{0}^{\tau}\,\mathrm{d}x+ \int_{0}^{\tau}\left( k\frac{\partial T(x,t)}{\partial x}+q(t) \right)\bigg |_{x=0} \lambda_2(t)\, \mathrm{d}t\\
&+\int_{0}^{\tau}\left(k\frac{\partial T(x,t)}{\partial x}\right) \bigg |_{x=L} \lambda_3(t)\, \mathrm{d}t-\int_{0}^{L} \bigg(T(x,t)-T_0(x,t)\bigg)\bigg|_{t=0} \ \lambda_4(x) \, \mathrm{d}x.
\end{aligned}
\end{equation*}

To obtain the adjoint state equation, we set
\begin{equation*}
\frac{\partial J}{\partial T}=0,
\end{equation*}
and it yields
\begin{equation*}
\left\{
\begin{aligned}
& \rho c\frac{\partial \lambda_1(x,t)}{\partial t}
+ \frac{\partial }{\partial x}\left ( k\frac{\partial \lambda_1(x,t)}{\partial x} \right )=S(x,t),\\
& -k\frac{\partial \lambda_1(x,t)}{\partial{x}}\bigg|_{x=0}=0,
\quad -k\frac{\partial{\lambda_1(x,t)}}{\partial{x}}\bigg|_{x=L}=0,\\
& \lambda_1(x,\tau)=0,\\
& \lambda_2(t)=\lambda_1(0,t),\\
& \lambda_3(t)=-\lambda_1(L,t),\\
& \lambda_4(x)=-\rho c \lambda_1(x,0).
\end{aligned}
\right.
\end{equation*}
Here the source term is given by
\begin{equation*}
S(x,t)=2\left(\Lambda_L\left(t\right)-\Lambda_m\left(t\right)\right)\frac{a}{(V(x,t))^2}+\frac{\alpha}{L}\left(T(L,t)-T_m(L,t)\right) .
\end{equation*}
From \eqref{eq:lagrangian}, through a straightforward calculation, the gradients with respect to the model parameters are given by
\begin{equation}\label{eq:grad1}
\left\{
\begin{aligned}
&\frac{\partial J}{\partial q}(t)=\lambda_2(t)=\lambda_1(0,t),\\
&\frac{\partial J}{\partial L}=\int_{0}^{\tau} \left(\Lambda_L\left(t\right)-\Lambda_m\left(t\right)\right)\frac{\partial \Lambda_L\left(t\right)}{\partial L}\, \mathrm{d}t
= \int_{0}^{\tau} \frac{2\left(\Lambda_L\left(t\right)-\Lambda_m\left(t\right)\right)}{V(L,t)}\,\mathrm{d}t .
\end{aligned}
\right.
\end{equation}
In order to change the final condition to initial condition, a change of variables can be employed :
\begin{equation*}
\mu(x,t)=\lambda_1(x,\tau-t).
\end{equation*}
Consequently, the adjoint state equation is rewritten as
\begin{equation*}\label{eq:adjoint}
\left\{
\begin{aligned}
& \rho c\frac{\partial \mu(x,t)}{\partial t}
+ \frac{\partial }{\partial x}\left ( k\frac{\partial \mu(x,t)}{\partial x} \right )=S(x,\tau-t),\\
& -k\frac{\partial \mu(x,t)}{\partial{x}}\bigg|_{x=0}=0,
\quad -k\frac{\partial{\mu(x,t)}}{\partial{x}}\bigg|_{x=L}=0,\\
& \mu(x,0)=0.
\end{aligned}
\right.
\end{equation*}
Therefore, according to \eqref{eq:grad1}, the gradients with respect to $q(t)$ and $L$ can be represented by
\begin{equation*}\label{eq:gradient}
\left\{
\begin{aligned}
&\frac{\partial J}{\partial q}(t)=\lambda_1(0,t)=\mu(0,\tau-t),\\
&\frac{\partial J}{\partial L}= \int_{0}^{\tau} \left(\Lambda_L\left(t\right)-\Lambda_m\left(t\right)\right)\frac{\partial \Lambda\left(t\right)}{\partial L}\, \mathrm{d}t
= \int_{0}^{\tau} \frac{2\left(\Lambda_L\left(t\right)-\Lambda_m\left(t\right)\right)}{V(L,t)}\,\mathrm{d}t .
\end{aligned}
\right.
\end{equation*}

Next, we use the conjugate gradient method and the steepest descent method to identify the heat flux $q(t)$ and the thickness $L$, respectively.

\subsection{Update $q$ with the conjugate gradient method }
To numerically reconstruct the heat flux $q$, we shall discretize the heat flux with respect to the time $t$. Suppose that $[0, \tau]$ is discretized as follows
$$0=t_0\leq t_1 \leq \cdots\leq t_i\leq t_{i+1}\leq \cdots \leq t_{N}=\tau.$$
The reconstruction schemes of the heat flux based on the conjugate gradient (CG) method is described as follows
\begin{equation}\label{eq:q}
q_i^{n+1}=q_i^{n}+\beta^{n}p_i^n,
\end{equation}
where the subscript $i$ indicates the discretization of the heat flux in time, and the superscripts $n$ and $n+1$ denote the iteration steps.
$p_i^n$ signifies the conjugate direction and it is generated by the rule
\begin{equation*}
p_i^n=
\begin{cases}
&-g_i^n, \qquad \qquad\quad \ n=1,\medskip \\
&-g_i^{n}+\alpha^{n}p_{i}^{n-1}, \quad n\geq 2,\\
\end{cases}
\end{equation*}
where $\alpha^{n}$ is the CG update parameter given by
\begin{equation*}
\alpha^{n}=\frac{\displaystyle \sum_{i=0}^{N}(g_i^{n})^{T}(g_i^{n}-g_i^{n-1})}{\displaystyle \sum_{i=0}^{N}\left \| g_i^{n-1} \right \|^2},
\end{equation*}
with $\left \| \cdot \right \|$ denoting the Euclidean norm, and
\begin{equation*}
g_i^{n}=\frac{\partial J}{\partial q}\Big|_{t=t_i}^n.
\end{equation*}
In addition, the step size $\beta^n$ is obtained by the exact line search and can be described as
\begin{equation*}
\beta ^{n}=\frac{\displaystyle \sum_{i=0}^{N} \left(\Lambda^n_L\left(t_i\right)-\Lambda_m\left(t_i\right)\right) \Delta t_{in}\,}{ \displaystyle\sum_{i=0}^{N}{\left [ \Delta t_{in} \right ]^2}\,},
\end{equation*}
where $\Lambda^n_L\left(t\right)$ is the solution of the forward problem and $\Delta t_n$ is the change in transmission time and can be expressed as:
\begin{equation*}\label{eq:delta}
\Delta t_{n}(t)=\int_{0}^{L}{\frac{1}{V_{g^{n}}(x,t)}}\mathrm{d}x.
\end{equation*}
Here $V_{g^n}=aT_{g^n}+b$ and $T_{g^n}(x,t)$ is the solution of the following sensitivity equation
\begin{equation*}\label{eq:det}
\left\{
\begin{aligned}
& \rho c\frac{\partial T}{\partial t}=\frac{\partial }{\partial x}\left ( k\frac{\partial T}{\partial x} \right ),\\
& -k\frac{\partial T}{\partial x}\bigg|_{x=0}=g^n,\quad -k\frac{\partial T}{\partial x}\bigg| _{x=L}=0,\\
& T|_{t=0}=T_{0}.
\end{aligned}
\right.
\end{equation*}




\subsection{Update $L$ with the steepest descent method }
The reconstruction of the thickness based on the steepest descent method is described as follows:
\begin{equation}\label{eq:L}
L^{n+1}=L^n+\lambda^nd^n,
\end{equation}
where the superscripts $n$ and $n+1$ denote the iteration steps, and $d^n$ denotes the negative gradient direction respect to $L$,
\begin{equation*}
d^n=- \frac{\partial J}{\partial L}\bigg|_{L=L^n}
=\int_{0}^{\tau} \frac{2\left(\Lambda_m(t)-\Lambda_L(t)\right)}{V(L^n,t)}\,\mathrm{d}t .
\end{equation*}
And the step size $\lambda^n$ is determined by an inexact line search technique called Wolfe-Powell search method. Assuming that $f(L)=J(q,L)$ is differentiable, the Wolfe-Powell search method is used to find $\lambda^n$ along $d^n$ such that
\begin{equation*}
\triangledown f(L^n+\lambda^{n}d^{n})^{T}d^{n}\geq \sigma \triangledown f(L^n)^{T}d^{n},
\end{equation*}
\begin{equation*}
f(L^n+\lambda^{n}d^{n})\leq f(L^n)+\rho \triangledown f(L^n)^{T}d^{n},\quad \rho\in(0,1/2),\sigma\in(\rho,1).
\end{equation*}
Assuming that $\varphi(\lambda^n)=f(L^n+\lambda^nd^n)$, the strategy for computing the step length $\lambda^n$ can be described as follows:\medskip

\noindent\textbf{Step1}: Let $\lambda^0=0,\lambda^{max}>0$, and choose $\lambda^1 \in [\lambda^0,\lambda^{max}], \rho \in (0,1/2), \sigma\in(\rho,1)$. Evaluate $\varphi(\lambda^0)$ and $\varphi^{'}(\lambda^0)$. Let $a_0=\lambda^0,b_0=\lambda^{max},n=0$.\\
\textbf{Step2}: Evaluate $\varphi(\lambda^n)$. If $\varphi (\lambda^{n})\leq\varphi(\lambda^0)+\rho\lambda^{n}\varphi^{'}(\lambda^0),$ go to \textbf{Step3}. Else, go to \textbf{Step4}, set $a_{n+1}=a_{n},b_{n+1}=\lambda^{n}$.\\
\textbf{Step3}: Evaluate $\varphi^{'}(\lambda^n)$. If $\varphi ^{'}(\lambda^{n})\geq \sigma\varphi^{'}(\lambda^0)$, stop. Else, set $a_{n+1}=\lambda^{n},b_{n+1}=b_{n}$, go to \textbf{Step4}.\\
\textbf{Step4}: Let $\lambda^{n+1}=\frac{\displaystyle a_{n+1}+b_{n+1}}{\displaystyle 2 }$, set $n=n+1$, go to \textbf{Step2}.

\subsection{Optimize algorithm iteration format } 

In this paper, we iterate the heat flux $q$ and the thickness $ L$ alternatively, and the proposed algorithm is listed as follows:\medskip

\noindent\textbf{Step1}: Choose an initial point $q_i^0$, $L^0$, $\varepsilon \in (0,1)$.\\
\textbf{Step2}: Fixed $L^n$. Update $q_i$ using the formular \eqref{eq:q}.\\
\textbf{Step3}: Fixed $q_i^{n+1}$ update $L$ using the formular \eqref{eq:L}.\\
\textbf{Step4}: Evaluate $J(q_i^{n+1},L^{n+1})$. If $J(q_i^{n+1},L^{n+1})<\varepsilon$, stop. Else, set $n=n+1$, go to \textbf{Step2}.

\begin{algorithm}[h]
\caption{Alternating iteration algorithm} 
\begin{algorithmic}[1]
\Require
$q^0(N), L^0, crl,n_{max},\varepsilon$.

\Ensure
$q(N)$,
$L$,
$T(nl)$.

\State $q^n\leftarrow q^0$, $L^n\leftarrow L^0$, $J\leftarrow J^0$.
\While {$abs(J)>crl .AND. n<n_{max}$}
\State call gradient
\State $g^n\leftarrow g1$,
\State call cgm
\State $p^n\leftarrow p1$,
\State call bet
\State $\beta^n\leftarrow \beta1$,
\For {$i=1,N$}
\State $q^n(i)\leftarrow q^n(i)-bet*p^n(i) $
\State $g2(i)\leftarrow g1(i)$
\State $p2(i)\leftarrow p1(i)$
\EndFor
\State $//$ update $q$
\If {$abs(L-j1)>\varepsilon$}
\State $j1\leftarrow L$
\State call wolfe(j)
\State $//$ update $L$
\Else \State $aa\leftarrow aa/10$
\State $//$ update regularization parameter
\EndIf

\State compute objective function $J$.

\EndWhile
\end{algorithmic}
\end{algorithm}
%
\section {Convergence Analysis}

In this section, we shall analyze the convergence of the reconstruction scheme proposed in the previous section.
Let $\left( q_{i}^{*},L^{*} \right)$ be the optimal solution to the optimization problem \eqref{eq:lagrangian}, i.e.,
\begin{equation}\label{eq:sql}
J\left(q_{i}^{*},L^{*} \right) \leq J\left(q_{i}^n, L^n \right),\quad \forall\, q_{i}^n \in \mathbb{R}^{N},\ L^n\in \mathbb{R}.
\end{equation}
It is clear that the necessary condition of \eqref{eq:sql} is:
\begin{equation*}
\triangledown J\left( q_{i}^{*},L^{*}\right) = \left( g_{i}^{*}, -d^{*}\right)^{T}=0,
\end{equation*}
and hence it is sufficient for us to prove
\begin{equation}\label{eq:prove}
\lim_{n\rightarrow\infty} \inf \| \left( g_{i}^{n}, d^{n} \right)\|=0.
\end{equation}

Next, we prove that the optimization algorithm that consists of \eqref{eq:q} and \eqref{eq:L} satisfies the convergence condition \eqref{eq:prove}. Before we discuss the convergence, we introduce some notations and important lemmas.

\begin{definition}\label{difinition:d1}
$Polka-Ribi\acute{e}re-Polyak (PRP)$ method is a nonlinear conjugate gradient method, and it has the following form:
\begin{equation}\label{eq:PRP}
\begin{aligned}
&q^{n+1}=q^{n}+\beta^{n}p^{n},\\
&p^{n}=\left\{
\begin{aligned}
&-g^{n} , &\quad n=1, \\
&-g^{n}+\alpha^{n}p^{n-1} , &\quad n\geq2,
\end{aligned}
\right.
\end{aligned}
\end{equation}
where
\begin{equation}\label{eq:alpha2}
\alpha^{n}_{PRP}=\frac{g^{nT}(g^{n}-g^{n-1})}{\left \| g^{n-1} \right \|^2}.
\end{equation}

\end{definition}
\begin{definition}\label{definition:main3}
Exact line search: at each iteration, the step size $\beta^n$ is selected so that
\begin{equation*}
f(q^n+\beta^n p^n)=\underset{\beta}{\min} f(q^n+\beta p^n),
\end{equation*}
where the objective functional is $f(q)=J(q,L), q(t)\in L^{2}\left(\left[0,\tau \right]\right)$.
\end{definition}

\begin{rem}\label{remark2}
Iteration algorithm \eqref{eq:q} is a PRP conjugate method with the exact line search.
\end{rem}

Next, we prove the convergence of the PRP conjugate method with an exact line search as well as its convergence condition. To that end, we first derive several auxiliary lemmas.

\begin{lem}\label{lem:lemma1}
\cite{Powell} Let $\theta_n$ be the angle between the searching direction $p^n$ and the negative gradient direction $-g^n$. Then
\begin{equation*}
\cos \theta_n =\frac{-g^{nT}p^n}{\|g^n\|\|p^n\|}.
\end{equation*}
When the line search is the exact line search, the angle $\theta_n$ is represented by Figure~\ref{fig.ang}. If $\alpha$ is given by \eqref{eq:alpha2}, we have
\begin{equation}\label{eq:theta}
\tan\theta_{n+1}\leq \sec\theta_n \frac{\|g^{n+1}-g^n\|}{\left\|g^n\right\|}.
\end{equation}
\end{lem}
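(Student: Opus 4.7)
The plan is to compute $\cos\theta_{n+1}$ and $\sin\theta_{n+1}$ directly from the PRP recursion $p^{n+1}=-g^{n+1}+\alpha^{n+1}p^n$, exploiting the orthogonality that exact line search supplies, and then combine the resulting expressions with the $\sec\theta_n$ identity.

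First I would use the defining property of exact line search, namely that $\beta^n=\arg\min_\beta f(q^n+\beta p^n)$ implies $g^{(n+1)T}p^n=0$. Substituting $p^{n+1}=-g^{n+1}+\alpha^{n+1}p^n$ and using this orthogonality gives $-g^{(n+1)T}p^{n+1}=\|g^{n+1}\|^2$, hence
\begin{equation*}
\cos\theta_{n+1}=\frac{-g^{(n+1)T}p^{n+1}}{\|g^{n+1}\|\,\|p^{n+1}\|}=\frac{\|g^{n+1}\|}{\|p^{n+1}\|}.
\end{equation*}
Expanding $\|p^{n+1}\|^2$ and again invoking $g^{(n+1)T}p^n=0$ yields $\|p^{n+1}\|^2=\|g^{n+1}\|^2+(\alpha^{n+1})^2\|p^n\|^2$, which in turn gives
\begin{equation*}
\sin^2\theta_{n+1}=1-\cos^2\theta_{n+1}=\frac{(\alpha^{n+1})^2\|p^n\|^2}{\|p^{n+1}\|^2},\qquad \tan\theta_{n+1}=\frac{|\alpha^{n+1}|\,\|p^n\|}{\|g^{n+1}\|}.
\end{equation*}

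Next I would estimate $|\alpha^{n+1}|$ by Cauchy--Schwarz applied to the numerator of the PRP formula \eqref{eq:alpha2}, getting
\begin{equation*}
|\alpha^{n+1}|\leq\frac{\|g^{n+1}\|\,\|g^{n+1}-g^n\|}{\|g^n\|^2},
\end{equation*}
so that
\begin{equation*}
\tan\theta_{n+1}\leq \frac{\|p^n\|\,\|g^{n+1}-g^n\|}{\|g^n\|^2}.
\end{equation*}

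Finally, I would rewrite the right-hand side in terms of $\sec\theta_n$. By the same exact-line-search argument at the previous step, $g^{nT}p^{n-1}=0$ and therefore $-g^{nT}p^n=\|g^n\|^2$, which gives
\begin{equation*}
\sec\theta_n=\frac{\|g^n\|\,\|p^n\|}{-g^{nT}p^n}=\frac{\|p^n\|}{\|g^n\|}.
\end{equation*}
Substituting this into the previous inequality produces exactly \eqref{eq:theta}. The only real subtlety is making sure both orthogonality identities (at iteration $n$ and iteration $n{+}1$) are used in the right places; everything else is Cauchy--Schwarz and bookkeeping, so I do not anticipate a substantive obstacle.
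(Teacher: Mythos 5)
Your proof is correct and follows essentially the same route as the paper's (which is Powell's original argument): write $\tan\theta_{n+1}=|\alpha^{n+1}|\,\|p^n\|/\|g^{n+1}\|$, bound $\alpha^{n+1}$ via Cauchy--Schwarz applied to \eqref{eq:alpha2}, and identify $\|p^n\|/\|g^n\|$ with $\sec\theta_n$. The only difference is that you derive the two intermediate identities \eqref{eq:f1} and \eqref{eq:f2} algebraically from the exact-line-search orthogonality $g^{(n+1)T}p^n=0$, whereas the paper reads them off Figure~\ref{fig.ang}; your version is the more self-contained of the two and also handles the sign of $\alpha^{n+1}$ more carefully.
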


\begin{figure}[h]
\centering
\includegraphics[width=0.3\textwidth]{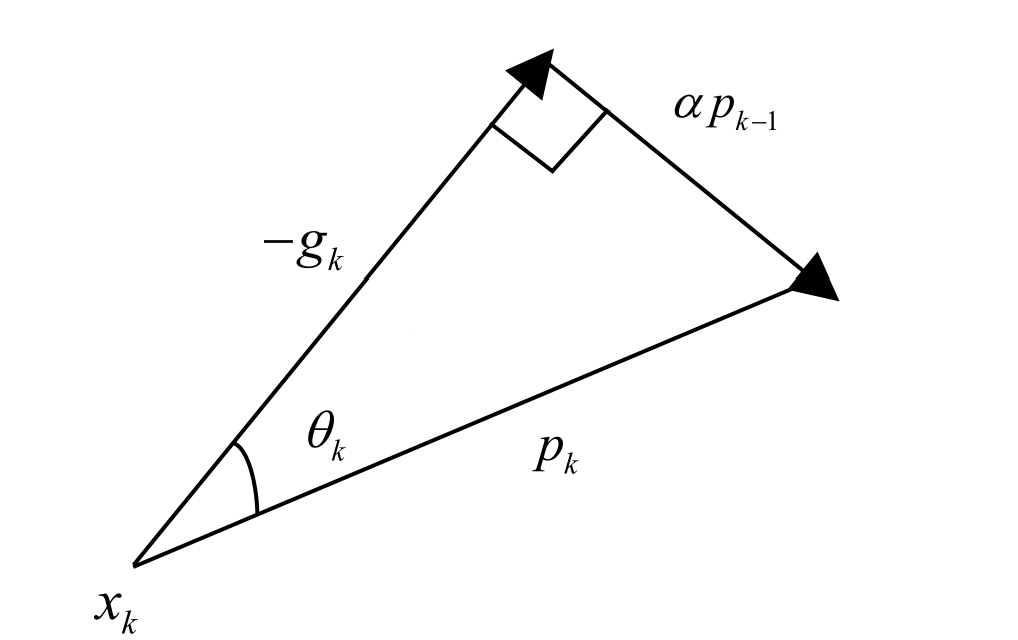}
\caption{The definition of figure}
\label{fig.ang}
\end{figure}
\begin{proof}
Figure~\ref{fig.ang} gives the equation
\begin{equation}\label{eq:f1}
\left\|p^n\right\|=\sec \theta_n\left\|g^n\right\|.
\end{equation}\\
Further, if $n$ is replaced by $n+1$ in Figure~\ref{fig.ang} , we find the identity
\begin{equation}\label{eq:f2}
\alpha^{n+1}\left\|p^{n}\right\|=\tan \theta_{n+1}\|g^{n+1}\|.
\end{equation}
By \eqref{eq:alpha2}, one has
\begin{equation*}
\alpha^{n+1}=\frac{(g^{n+1})^{T}(g^{n+1}-g^{n})}{\left\|g^{n}\right\|^2},
\end{equation*}
and hence by the Cauchy-Schwarz inequality,
\begin{equation}\label{eq:f3}
\alpha^{n+1}\leqslant \frac{\left\|g^{n+1}\right\|\left\|g^{n+1}-g^n\right\|}{\left\|g^{n}\right\|^2}.
\end{equation}
Next, by the elimination of $\left\|g^{n}\right\|$ from \eqref{eq:f1} and\eqref{eq:f2}, one can show the following equality,
\begin{equation}\label{eq:f4}
\alpha^{n+1}=\frac{\tan \theta_{n+1}\left\|g^{n+1}\right\|}{\sec \theta_n\left\|g^n\right\|},
\end{equation}
which in combination with \eqref{eq:f3} and \eqref{eq:f4} readily yields that
\begin{equation*}
\tan\theta_{n+1}\leqslant \sec\theta_n \frac{\left\|g^{n+1}-g^n\right\|}{\left\|g^n\right\|}.
\end{equation*}

The proof is complete.
\end{proof}

\begin{lem}\label{lem:lemma2}
\cite{YYHDYX} Consider the iterative method of the form $q^{n+1}=q^n+\beta^n p^n$, where $p^n$ satisfies the descent condtion $p^{nT}g^n\leq0$,
the step size $\beta^n$ is obtained by the exact line search, the objective functional $f(q)$ is bounded below, and its gradient $\triangledown f(q)$ satisfies the Lipschitz  condition, then
\begin{equation}\label{eq:sum}
\sum_{n\geqslant 1}\frac{(g^{nT}p^n)^2}{\left \| p^n\right \|^2}<\infty,
\end{equation}
and according to the definition of $\theta_n$, \eqref{eq:sum} is equivalent to
\begin{equation}\label{eq:cos}
\sum_{n\geqslant 1}\left\|g^n\right\|^2\cos \theta_n <\infty.
\end{equation}
\end{lem}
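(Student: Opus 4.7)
The plan is to carry out the classical Zoutendijk argument adapted to the exact line search setting. The key inputs are: (i) $\nabla f$ is Lipschitz with some constant $L>0$; (ii) $p^n$ is a descent direction so $g^{nT}p^n\leq 0$; (iii) $\beta^n$ is the exact minimizer of $\varphi(\beta):=f(q^n+\beta p^n)$; and (iv) $f$ is bounded below by some $f^{*}$. I expect the whole argument to be mostly routine, with the only delicate point being to convert the exact line-search property into a quantitative one-step decrease.

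First, I would use the Lipschitz continuity of $\nabla f$ to derive the standard descent-lemma inequality
\begin{equation*}
f(q^n+\beta p^n)\leq f(q^n)+\beta\,g^{nT}p^n+\frac{L}{2}\beta^2\|p^n\|^2\quad\text{for all }\beta\geq 0,
\end{equation*}
by writing $f(q^n+\beta p^n)-f(q^n)=\int_0^1 \nabla f(q^n+t\beta p^n)^T(\beta p^n)\,dt$ and estimating $\nabla f(q^n+t\beta p^n)-\nabla f(q^n)$ via the Lipschitz bound. I would then minimize the quadratic upper bound over $\beta\geq 0$; using $g^{nT}p^n\leq 0$, this produces $\beta_{\min}=-g^{nT}p^n/(L\|p^n\|^2)\geq 0$ together with
\begin{equation*}
f(q^n+\beta_{\min}p^n)\leq f(q^n)-\frac{(g^{nT}p^n)^2}{2L\|p^n\|^2}.
\end{equation*}

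Because $\beta^n$ is the exact line-search minimizer along $p^n$, one has $f(q^{n+1})=f(q^n+\beta^n p^n)\leq f(q^n+\beta_{\min}p^n)$, so
\begin{equation*}
\frac{(g^{nT}p^n)^2}{\|p^n\|^2}\leq 2L\bigl(f(q^n)-f(q^{n+1})\bigr).
\end{equation*}
Telescoping from $n=1$ to $N$, the right-hand side becomes $2L\bigl(f(q^1)-f(q^{N+1})\bigr)\leq 2L\bigl(f(q^1)-f^{*}\bigr)$, uniformly in $N$. Letting $N\to\infty$ yields \eqref{eq:sum}. The equivalence with \eqref{eq:cos} follows immediately from the defining identity $-g^{nT}p^n=\|g^n\|\|p^n\|\cos\theta_n$, which gives $(g^{nT}p^n)^2/\|p^n\|^2=\|g^n\|^2\cos^2\theta_n$. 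The only nontrivial step in the whole plan is the passage from exact line search to the explicit minimizer of the Lipschitz majorant; everything else is bookkeeping.
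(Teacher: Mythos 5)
Your argument is correct, and it is the standard proof of the Zoutendijk condition: descent lemma from the Lipschitz bound, minimization of the quadratic majorant at $\beta_{\min}=-g^{nT}p^n/(L\|p^n\|^2)$, comparison with the exact line-search minimizer, telescoping, and boundedness below. For comparison: the paper supplies no proof of this lemma at all; it is quoted from the cited reference (Dai--Yuan), so there is no ``paper's route'' to match against, and your self-contained derivation is exactly the classical one found there. One discrepancy worth flagging: your identity $(g^{nT}p^n)^2/\|p^n\|^2=\|g^n\|^2\cos^2\theta_n$ yields $\sum_{n\geq 1}\|g^n\|^2\cos^2\theta_n<\infty$, whereas \eqref{eq:cos} as printed has $\cos\theta_n$ to the first power. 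These are not literally equivalent (for $\theta_n\in[0,\pi/2]$ one only has $\cos^2\theta_n\leq\cos\theta_n$, so the printed statement is the stronger one and does not follow from \eqref{eq:sum} via your identity). The printed form is almost certainly a typographical slip for the standard squared version, which is what the subsequent convergence argument actually needs; your version is the correct one, but you should not claim it matches \eqref{eq:cos} ``immediately'' without noting the exponent.
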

\begin{lem}\label{lem:lemma3}
\cite{YYSJ}
If the step size $\beta^n$ is obtained by the exact line search and the objective function $f(q)$ is  uniformly convex, then
\begin{equation*}
f(q^n)-f(q^n+s^n)\geqslant c\left\|s^n\right\|^2
\end{equation*}
holds, where $s^n=q^{n+1}-q^n$, $c>0$ is a constant, and further there is $\left\|s^n\right\|\rightarrow 0$ as $n\rightarrow \infty $.
\end{lem}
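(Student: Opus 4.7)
The plan is to read off the desired inequality directly from the uniform convexity of $f$ and the orthogonality condition produced by the exact line search, and then to extract the summability $\|s^n\|\to 0$ from the monotone decrease of $f(q^n)$.

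First, I would invoke the defining inequality of uniform convexity: there exists $\mu>0$ such that for all admissible $x,y$,
\begin{equation*}
f(y)\ \geq\ f(x)+\nabla f(x)^{T}(y-x)+\tfrac{\mu}{2}\|y-x\|^{2}.
\end{equation*}
Setting $x=q^{n+1}$ and $y=q^{n}$, and writing $s^{n}=q^{n+1}-q^{n}$, this rearranges to
\begin{equation*}
f(q^{n})-f(q^{n+1})\ \geq\ -\nabla f(q^{n+1})^{T}s^{n}+\tfrac{\mu}{2}\|s^{n}\|^{2}.
\end{equation*}
Next, I would use the first-order optimality characterization of the exact line search: since $\beta^{n}$ minimizes $\varphi(\beta)=f(q^{n}+\beta p^{n})$, one has $\varphi'(\beta^{n})=\nabla f(q^{n+1})^{T}p^{n}=0$. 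Because $s^{n}=\beta^{n}p^{n}$ is colinear with $p^{n}$, this forces $\nabla f(q^{n+1})^{T}s^{n}=0$, and the previous inequality collapses to
\begin{equation*}
f(q^{n})-f(q^{n+1})\ \geq\ \tfrac{\mu}{2}\|s^{n}\|^{2},
\end{equation*}
which is the desired estimate with $c=\mu/2$.

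For the second assertion I would argue as follows. Exact line search guarantees $f(q^{n+1})\leq f(q^{n})$, so the sequence $\{f(q^{n})\}$ is nonincreasing; uniform convexity implies coercivity and hence a finite lower bound $f^{\star}$, so $f(q^{n})\searrow f^{\infty}\geq f^{\star}$. Telescoping the inequality above over $n=1,\dots,N$ gives
\begin{equation*}
\frac{\mu}{2}\sum_{n=1}^{N}\|s^{n}\|^{2}\ \leq\ f(q^{1})-f(q^{N+1})\ \leq\ f(q^{1})-f^{\star}<\infty,
\end{equation*}
so $\sum_{n\geq 1}\|s^{n}\|^{2}<\infty$ and in particular $\|s^{n}\|\to 0$.

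The only real subtlety is the appeal to uniform convexity in an infinite-dimensional setting (here $q\in L^{2}(0,\tau)$): one needs the modulus of convexity to be quadratic in the chosen norm so that the constant $\mu$ is genuine and independent of $n$, and one must verify that the exact-line-search identity $\nabla f(q^{n+1})^{T}p^{n}=0$ holds with the same inner product used in the convexity estimate. Once these compatibility issues are resolved (they are built into the hypotheses as stated), the rest is a short deterministic calculation, so I do not expect a conceptual obstacle beyond this bookkeeping.
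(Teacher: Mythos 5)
Your proof is correct. The paper itself offers no argument for this lemma---it is quoted from \cite{YYSJ} without proof---and your derivation is exactly the standard one found there: apply the strong-convexity inequality at $x=q^{n+1}$, $y=q^{n}$, kill the cross term $\nabla f(q^{n+1})^{T}s^{n}$ via the exact-line-search orthogonality $\nabla f(q^{n+1})^{T}p^{n}=0$, and telescope using the lower bound (which, as you note, uniform convexity supplies) to get $\sum_{n}\|s^{n}\|^{2}<\infty$ and hence $\|s^{n}\|\to 0$.
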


\begin{lem}\label{thm:thm1}
Assume that the objective function $f(q)$ is uniformly convex and bounded from below, and its gradient $\triangledown f(q)$ satisfies the Lipschitz condition. Consider the algorithm \eqref{eq:PRP}--\eqref{eq:alpha2}, if the step size $\beta^n$ is obtained by the exact line search,
then
\begin{equation*}
\lim_{n\rightarrow \infty }\inf\left \| g^n \right \|=0.
\end{equation*}
\end{lem}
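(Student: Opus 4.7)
The plan is to argue by contradiction. Suppose $\liminf_{n\to\infty}\|g^n\|>0$, so that $\|g^n\|\geq\gamma$ for some $\gamma>0$ and all large $n$; I will derive a conflict with the Zoutendijk-type summability of Lemma~\ref{lem:lemma2}.

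The first step is to extract decay estimates from the standing hypotheses. Summing the inequality of Lemma~\ref{lem:lemma3} and using that $f$ is bounded from below gives $\sum_n\|s^n\|^2<\infty$, hence $\|s^n\|\to 0$. The Lipschitz assumption on $\nabla f$ then yields $\|g^{n+1}-g^n\|\leq L\|s^n\|$, so the ratio $\epsilon_n:=\|g^{n+1}-g^n\|/\|g^n\|$ tends to $0$. I would also verify the descent hypothesis $p^{nT}g^n\leq 0$ needed to invoke Lemma~\ref{lem:lemma2}: induction using the exact-line-search orthogonality $p^{n-1\,T}g^n=0$ shows $p^{nT}g^n=-\|g^n\|^2$ at every step.

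The heart of the argument is to convert decay of $\epsilon_n$ into decay of the angle $\theta_n$. Squaring the estimate of Lemma~\ref{lem:lemma1} produces the scalar recursion
\begin{equation*}
\tan^2\theta_{n+1}\leq\epsilon_n^2\bigl(1+\tan^2\theta_n\bigr).
\end{equation*}
Choosing $N$ so that $\epsilon_n^2\leq 1/2$ for all $n\geq N$, a short induction first shows that $\tan^2\theta_n$ stays uniformly bounded by some constant $M$ (because once $\tan^2\theta_n\leq 1$ the recursion keeps it there, and larger initial values are contracted). Once this uniform bound is in place, the inequality $\tan^2\theta_{n+1}\leq(1+M)\epsilon_n^2$ together with $\epsilon_n\to 0$ forces $\tan\theta_n\to 0$, i.e.\ $\cos\theta_n\to 1$.

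Combining the two pieces, $\|g^n\|^2\cos\theta_n\geq\gamma^2/2$ for all sufficiently large $n$, contradicting $\sum_n\|g^n\|^2\cos\theta_n<\infty$ from Lemma~\ref{lem:lemma2}. I expect the middle step to be the main obstacle: a priori $\sec\theta_n$ may blow up and destroy the usefulness of Lemma~\ref{lem:lemma1}, so one must first pin down a uniform bound on $\tan^2\theta_n$ before letting $\epsilon_n\to 0$ drive the recursion to zero. The remaining bookkeeping—linking summability of $\|s^n\|^2$ to the smallness of $\epsilon_n$ and checking the exact-search descent property—is routine.
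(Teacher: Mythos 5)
Your proposal is correct and follows essentially the same route as the paper's own proof (which is Powell's argument): contradiction via $\|g^n\|\geq\gamma$, using Lemma~\ref{lem:lemma3} plus Lipschitz continuity to make $\|g^{n+1}-g^n\|/\|g^n\|$ small, feeding that into the angle recursion of Lemma~\ref{lem:lemma1} to keep $\theta_n$ bounded away from $\pi/2$, and contradicting the summability in Lemma~\ref{lem:lemma2}. Your version is in fact slightly tidier than the paper's: you explicitly verify the descent condition $p^{nT}g^n=-\|g^n\|^2$ needed for Lemma~\ref{lem:lemma2}, and your squared recursion $\tan^2\theta_{n+1}\leq\epsilon_n^2(1+\tan^2\theta_n)$ avoids the sign slip in the paper's inequality \eqref{eq:geq}, which should read $\|g^{n+1}-g^n\|\leq\frac{1}{2}\gamma$.
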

\begin{proof}
The proof follows a similar spirit to that of Theorem 1 in \cite{Powell}. By an absurdity argument, we assume that the theorem does not hold. Then there is a constant $\gamma $, such that for any $n\geqslant1$,
\begin{equation}\label{eq:b2}
\left\|g^n\right\|\geqslant\gamma.
\end{equation}
By Lemma~\ref{lem:lemma3}, there is $\left\|s^n\right\|\rightarrow 0, n\rightarrow \infty$, which combines with the fact that $\triangledown f(q)$ is Lipschitz continuous implies that there exists a positive integer $m$, such that
\begin{equation}\label{eq:geq}
\left\|g^{n+1}-g^n\right\|\geqslant\frac{1}{2}\gamma,
\end{equation}
which holds for any $n\geqslant m$.
Noticing that for any $\theta_n\in[0,\pi/2)$, there is
\begin{equation*}
\sec\theta_n\leqslant1+\tan\theta_n,
\end{equation*}
which together with \eqref{eq:theta}, \eqref{eq:b2} and \eqref{eq:geq} further implies that
\begin{equation*}
\begin{aligned}
\tan\theta_{n+1}&\leqslant \frac{1}{2}+\frac{1}{4}+\cdots +(\frac{1}{2})^{n-1-m}(1+\tan \theta_{m})
&=1+\tan{\theta_m},\quad \forall n\geqslant m.
\end{aligned}
\end{equation*}
Therefore, the angle $\theta_n$ between the searching direction $p_n$ and the negative gradient direction $-g^n$ is always smaller than an angle bounded above by ${\pi}/{2}$. Therefore, by \eqref{eq:cos}, we have $\left\|g^n\right\| \rightarrow0$, which is a contradiction to \eqref{eq:b2}. Thus the theorem must hold true.

The proof is complete.
\end{proof}

Next, we establish the convergence of the steepest descent algorithm with the Wolfe-Powell step search method.

\begin{lem}\label{lem:lemma4}
\cite{SYYX} Let the objective function $f(L)=J(q,L),L\in \mathbb{R}$ be differentiable and bounded from below, and $g(L)=\triangledown f(L)$ satisfy the Lipschitz condition. Suppose that the steepest descent method generates a sequence $L^n, n\geq1$, using the recurrence
\begin{equation*}
L^{n+1}=L^{n}+\lambda^nd^n,
\end{equation*}
where the direction $d^n$ denotes the negative gradient direction. If the step size $\lambda$ satisfies
\begin{equation*}
\left\{
\begin{aligned}
&(g^{n+1})^{T}d^{n}\geq \sigma (g^n)^{T} d^{n},\\
&f(L^{n+1})\leq f(L^n)-\rho \lambda^{n}(d^{n})^2,\quad \rho\in(0,1/2),\sigma\in(\rho,1),
\end{aligned}
\right.
\end{equation*}
where $g^{n+1}=g(L^{n+1})=g(L^n+\lambda^nd^n)$,
then one has
\begin{equation*}
f(L^n)-f(L^{n+1})\geq\frac{\rho(1-\sigma )}{M}\left\|g^n\right\|^2\cos^2\theta_n.
\end{equation*}
\end{lem}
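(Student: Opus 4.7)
The plan is to combine the two Wolfe-Powell conditions with the Lipschitz continuity of $g$ to derive the claimed descent inequality, following the classical Zoutendijk-type argument. The first step is to exploit the curvature condition $(g^{n+1})^Td^n\geq\sigma(g^n)^Td^n$ to obtain a lower bound on the step length $\lambda^n$. Rewriting this as $(g^{n+1}-g^n)^Td^n\geq(\sigma-1)(g^n)^Td^n$ and invoking the Lipschitz estimate $\|g^{n+1}-g^n\|\leq M|L^{n+1}-L^n|=M\lambda^n|d^n|$ together with the Cauchy-Schwarz inequality yields
\begin{equation*}
(g^{n+1}-g^n)^Td^n\leq M\lambda^n (d^n)^2.
\end{equation*}
Since $d^n$ is the negative gradient direction so that $(g^n)^Td^n<0$ and $\sigma-1<0$, chaining the two estimates gives
\begin{equation*}
\lambda^n\geq\frac{(1-\sigma)\bigl(-(g^n)^Td^n\bigr)}{M(d^n)^2}.
\end{equation*}

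Next, I would plug this lower bound into the sufficient-decrease condition. In the steepest-descent setting $d^n=-g^n$, the hypothesis $f(L^{n+1})\leq f(L^n)-\rho\lambda^n(d^n)^2$ is equivalent to $f(L^n)-f(L^{n+1})\geq\rho\lambda^n\bigl(-(g^n)^Td^n\bigr)$, because $(d^n)^2=-(g^n)^Td^n$ in that case. Inserting the lower bound on $\lambda^n$ derived above delivers
\begin{equation*}
f(L^n)-f(L^{n+1})\geq\frac{\rho(1-\sigma)\bigl((g^n)^Td^n\bigr)^2}{M(d^n)^2}.
\end{equation*}
Using the definition $\cos\theta_n=-(g^n)^Td^n/(\|g^n\|\,|d^n|)$, the ratio $\bigl((g^n)^Td^n\bigr)^2/(d^n)^2$ equals $\|g^n\|^2\cos^2\theta_n$, and the claimed inequality follows.

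The argument is essentially routine bookkeeping, and the only real obstacle is careful sign tracking: both $(\sigma-1)$ and $(g^n)^Td^n$ are negative, so their product is positive, and one must verify the correct direction of each chained inequality before taking moduli. Once established, this lemma is tailored for a Zoutendijk-style summation: since $f$ is bounded below, telescoping the decrease estimate over $n$ forces $\sum_n \|g^n\|^2\cos^2\theta_n<\infty$, and in the one-dimensional steepest-descent setting of the present paper where $\cos\theta_n\equiv 1$, this immediately yields the convergence $\|g^n\|\to 0$ that feeds into the overall convergence statement \eqref{eq:prove}.
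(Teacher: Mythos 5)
Your proof is correct. Note that the paper itself supplies no proof of this lemma: it is quoted verbatim from the cited optimization textbook \cite{SYYX}, so there is no in-paper argument to compare against. Your derivation is the standard one for that reference's descent estimate under the Wolfe--Powell conditions: use the curvature condition together with the Lipschitz bound $\|g^{n+1}-g^n\|\leq M\lambda^n|d^n|$ to obtain the step-length lower bound $\lambda^n\geq (1-\sigma)\bigl(-(g^n)^Td^n\bigr)/\bigl(M(d^n)^2\bigr)$, then substitute into the sufficient-decrease condition and identify $\bigl((g^n)^Td^n\bigr)^2/(d^n)^2=\|g^n\|^2\cos^2\theta_n$. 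The sign bookkeeping is handled correctly, and your observation that the paper's nonstandard form of the decrease condition (with $(d^n)^2$ in place of $-(g^n)^Td^n$) is only consistent with the steepest-descent choice $d^n=-g^n$, where $\cos\theta_n\equiv 1$, is exactly the reading needed for the subsequent Lemma~\ref{thm:thm2}.
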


\begin{lem}\label{thm:thm2}
If the function $f(L)$ is continuously differentiable and satisfys the conditions of Lemma~\ref{lem:lemma4}, then sequence $L^n$ generated by the steepest descent method satisfys:
\begin{equation*}
\lim_{n\rightarrow\infty }\left\|g^n\right\|^2=0.
\end{equation*}
\end{lem}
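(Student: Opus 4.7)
The plan is to exploit Lemma~\ref{lem:lemma4} as the workhorse and reduce the convergence statement to a telescoping sum argument. The key observation is that for the steepest descent method we have $d^n=-g^n$, so the angle $\theta_n$ between the search direction $d^n$ and the negative gradient $-g^n$ is identically zero, giving $\cos\theta_n\equiv 1$. Thus in the present setting Lemma~\ref{lem:lemma4} specialises to the clean one-line estimate
\begin{equation*}
f(L^n)-f(L^{n+1})\geq \frac{\rho(1-\sigma)}{M}\|g^n\|^2.
\end{equation*}

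From here I would simply sum this inequality over $n=1,2,\ldots,N$. The left-hand side telescopes to $f(L^1)-f(L^{N+1})$, which is uniformly bounded in $N$ because $f(L)$ is assumed to be bounded from below; denote a lower bound by $f_{\inf}$. This yields
\begin{equation*}
\sum_{n=1}^{N}\|g^n\|^2\leq \frac{M}{\rho(1-\sigma)}\bigl(f(L^1)-f_{\inf}\bigr),
\end{equation*}
uniformly in $N$, so the series $\sum_{n\geq 1}\|g^n\|^2$ converges. Convergence of the series forces the general term to vanish, that is $\|g^n\|^2\to 0$ as $n\to\infty$, which is exactly the claimed limit.

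I do not foresee any serious obstacle in this argument, since the hard analytical work (the per-step decrease quantified in terms of $\cos^2\theta_n$) has already been done in Lemma~\ref{lem:lemma4}. The only subtle point worth flagging in the write-up is the need to verify that the hypotheses of Lemma~\ref{lem:lemma4} are inherited: $f(L)=J(q,L)$ must be differentiable, bounded below, and have $g=\nabla f$ satisfying a Lipschitz condition, and the Wolfe--Powell conditions on $\lambda^n$ must be satisfied by the line search described in Subsection~3.3. These are precisely the standing assumptions of the present lemma, so no extra argument is required, and the conclusion $\lim_{n\to\infty}\|g^n\|^2=0$ follows.
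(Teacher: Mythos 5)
Your proposal is correct and follows essentially the same route as the paper's own proof: both invoke Lemma~\ref{lem:lemma4}, note that $\cos\theta_n\equiv 1$ for the steepest descent direction, telescope the per-step decrease, and use boundedness from below to conclude that the series $\sum_n\|g^n\|^2$ converges and hence its terms vanish.
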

\begin{proof}
By Lemma~\ref{lem:lemma4},
\begin{equation*}
f(L^n)-f(L^{n+1})\geq\frac{\rho(1-\sigma )}{M}\left\|g^n\right\|^2\cos^2\theta_n.
\end{equation*}
Thus
\begin{equation}\label{eq:aa1}
\begin{aligned}
&f(L^0)-f(L^{n+1})
=[f(L^0-f(L^1)]+[f(L^1)+f(L^2)]+\cdots +[f(L^{n})-f(L^{n+1})]\\
&=\sum_{k=0}^{n}[f(L^{k})-f(L^{k+1})]\geq\sum_{k=0}^{n}\frac{\rho(1-\sigma )}{M}\left\|g^k\right\|^2\cos^2(d^k,-g^k).
\end{aligned}
\end{equation}
Notice that $\cos^2(d_k,-g_k)=1,k=1,2,\cdots,{n}$. We therefore have from \eqref{eq:aa1} that
\begin{equation}\label{eq:aa2}
\sum_{k=0}^{n}\frac{\rho(1-\sigma )}{M}\left\|g_k\right\|^2\leq f(L^0)-f(L^{n+1}).
\end{equation}
Since $f(L)$ is bounded from below, one sees that $f(L^0)-f(L^{n+1})< \infty$, which together with \eqref{eq:aa2} readily implies that
\begin{equation*}
\lim_{n\rightarrow \infty}\left\|g^n\right\|^2=0.
\end{equation*}

The proof is complete.
\end{proof}

Next we prove that the optimization algorithm consisting of \eqref{eq:q} and \eqref{eq:L} satisfies the convergence condition \eqref{eq:prove}.
\begin{thm}
Consider the iterative algorithm consisting of \eqref{eq:q} and \eqref{eq:L}:
\begin{equation*}
\left\{
\begin{aligned}
&q_i^{n+1}=q_i^n+\beta^np^n_i,\\
&L^{n+1}=L^n+\lambda^nd^n,
\end{aligned}
\right.
\end{equation*}
Assume that the objective functional $J(q,L)$ satisfies the following conditions:\\
(a): $J(q,L)$ is continuously differentiable with respect to $q(t)$ and $L$;\\
(b): $J(q,L)$ is uniformly convex;\\
(c): $J(q,L)$ is bounded from below;\\
(d): Its gradient $\triangledown J(q,L)$ is Lipschitz continuous.\\
Then the optimization algorithm consisting of \eqref{eq:q} and \eqref{eq:L} satisfies the convergence condition \eqref{eq:prove}, i.e.,
\begin{equation*}
\liminf_{n\rightarrow \infty } \|\left(g_i^n ,d^n\right)\|_1=0,
\end{equation*}
{where $\|\cdot \|_1$ denotes the $l^1$-norm, namely $\|\left(g_i^n ,d^n\right)\|_1=\| g_i^n \|_1+|d^n|.$}
\end{thm}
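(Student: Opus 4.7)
The plan is to decouple the convergence analysis along the two coordinates of the alternating scheme and then recombine the conclusions via the definition of the $l^1$-norm. Writing the outer sequence as $(q^n, L^n)$, I would introduce the restricted functionals $\phi_n(q) := J(q, L^n)$ and $\psi_n(L) := J(q^{n+1}, L)$, and observe that assumptions (a)--(d) descend to each $\phi_n$ and $\psi_n$ with the same convexity modulus, gradient Lipschitz constant, and lower bound. Joint uniform convexity and joint Lipschitz continuity of $\nabla J$ provide this uniformity in $n$ automatically, which is what allows the individual convergence lemmas to be applied despite the outer index.

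Next I would handle the $q$-subproblem. By Remark~\ref{remark2}, the update rule \eqref{eq:q} is precisely the PRP conjugate gradient method with exact line search applied to $\phi_n$. Invoking Lemma~\ref{thm:thm1} on the resulting subsequence gives
\[
\liminf_{n\to\infty}\|g_i^n\| = 0,
\]
and since $g_i^n$ lies in the fixed finite-dimensional space $\mathbb{R}^{N+1}$ coming from the time discretization, this liminf is also zero in the $l^1$-norm. For the $L$-subproblem, \eqref{eq:L} is exactly the steepest descent iteration with a Wolfe--Powell line search applied to $\psi_n$. Since $L$ is scalar and $d^n = -\nabla\psi_n(L^n)$, the angle condition $\cos\theta_n = 1$ is trivially satisfied, so Lemma~\ref{thm:thm2} yields $\lim_{n\to\infty}|d^n|^2 = 0$, hence $|d^n|\to 0$. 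Combining these two facts,
\[
\liminf_{n\to\infty}\|(g_i^n, d^n)\|_1 \le \liminf_{n\to\infty}\|g_i^n\|_1 + \lim_{n\to\infty}|d^n| = 0,
\]
which is the desired conclusion.

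The main obstacle is that the restricted objective is itself changing across outer iterations, since $\phi_n$ depends on $L^n$ and $\psi_n$ depends on $q^{n+1}$, so the convergence lemmas cannot be quoted as pure black boxes for a fixed objective. I would address this by tracking the telescoping descent directly: because the CG sub-step decreases $\phi_n$ by an amount controlled by Lemma~\ref{lem:lemma3} and the Wolfe--Powell sub-step decreases $\psi_n$ by an amount controlled by Lemma~\ref{lem:lemma4}, the two estimates combine to give
\[
J(q^n, L^n) - J(q^{n+1}, L^{n+1}) \ge c_1\|s_q^n\|^2 + \frac{\rho(1-\sigma)}{M}\|g_L^n\|^2,
\]
and summability of the left-hand side from the lower bound on $J$ forces both terms on the right to vanish. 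The uniform constants supplied by joint convexity and joint Lipschitz continuity ensure that the internal arguments of Lemmas~\ref{thm:thm1} and~\ref{thm:thm2} carry over to the time-varying restricted objectives, which closes the gap between the per-coordinate convergence lemmas and the joint statement to be proved.
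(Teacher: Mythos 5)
Your proposal follows essentially the same route as the paper: decompose the alternating scheme into the PRP conjugate-gradient sub-iteration for $q$ and the steepest-descent/Wolfe--Powell sub-iteration for $L$, invoke Lemma~\ref{thm:thm1} to obtain $\liminf_{n\rightarrow\infty}\|g_i^n\|=0$ and Lemma~\ref{thm:thm2} to obtain $|d^n|\rightarrow 0$, and then add the two conclusions under the $l^1$-norm. The only difference is that you explicitly flag, and attempt to patch via a telescoping descent estimate, the fact that the restricted objectives $J(\cdot,L^n)$ and $J(q^{n+1},\cdot)$ vary with the outer index --- an issue the paper's proof passes over by quoting the two lemmas as black boxes --- so your version is a more careful rendering of the same argument rather than a different one.
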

\begin{proof}
We first consider the iterative algorithm \eqref{eq:q}.
Since the step size $\lambda^{n}$ is searched by the Powell-Wolfe method, it satisfies the following conditions:
\begin{equation}\label{eq:e}
\begin{aligned}
&\varphi (\lambda^{n})\leq\varphi(0)+\rho\lambda^{n}\varphi^{'}(0),\\
&\varphi ^{'}(\lambda^{n})\geq \sigma\varphi^{'}(0),\quad \rho\in(0,1/2),\sigma\in(\rho,1),
\end{aligned}
\end{equation}
where $\varphi (\lambda^{n})=J(q^n, L^n+\lambda^{n}d^{n})$. By using conditions (a) (c) (d) and \eqref{eq:e}, we see that Lemma~\ref{thm:thm2} holds.
Hence by Lemma~\ref{thm:thm2}, we have
\begin{equation}\label{eq:first}
\lim_{n\rightarrow\infty}\left | d^n \right |=0.
\end{equation}

We proceed to consider the iterative algorithm \eqref{eq:L}. By virtue of the conditions (a) (b) (c) (d), we see that the iteration algorithm \eqref{eq:q} is a PRP conjugate method with the exact line search. By Lemma~\ref{thm:thm1},
\begin{equation}\label{eq:second}
\lim_{n\rightarrow \infty }\inf\left \| g_i^n \right \|=0,
\end{equation}
According to \eqref{eq:first} and \eqref{eq:second},
\begin{equation*}
\lim_{n\rightarrow \infty }\inf \left(\| g_i^n \|+|d^n|\right)=0.
\end{equation*}
Thus,
\begin{equation*}
\lim_{n\rightarrow \infty }\inf \|\left(g_i^n ,d^n\right)\|_1
=\lim_{n\rightarrow \infty }\inf \left(\| g_i^n \|+|d^n|\right)=0.
\end{equation*}

The proof is complete.
\end{proof}

\section {Numerical Examples}

In this section, we present several numerical examples to verify the effectiveness and robustness of the proposed scheme in simultaneously reconstructing the surface heat flux and the thickness of a solid structure under different {acoustic time accuracies, initial fluxes, and initial thicknesses.} It is emphasized that all the data in our numerical examples are collected by lab experiments following the setup described in Figure~\ref{fig.Model}.

The specimens with a thickness of $L=50\, \rm{mm}$ are heated at one boundary and the surface heat flux is $q(t)=10^5\, \rm{J/s}$. The ultrasonic wave probes are stalled on the other boundary of the specimens with the detection frequency set to be $\omega=1\, \mathrm{Hz}$, and the total detection time set to be $\tau=500\,\rm{s}$. Moreover, the thermal conductivity of the specimens is $k=50\,\rm{W/(m\cdot ^\circ\!C)}$, specific heat is $c=400\, \rm{J/(kg\cdot^\circ\! C)}$ and density of the material is $\rho=7800\, \rm{kg/m^3}$. The initial tempareture is chosen as $T_0=\rm{26\, ^\circ C}$. The relationship between the velocity and temperature is given as follows:
\begin{equation*}
V(T)=-0.4521T+3259.9.
\end{equation*}
In the following numerical examples, the stopping criterion for the iterations is set to be $J(q, L)<5\times10^{-18}$. The Fortran software is used for implementing of Algorithm 1.

The reconstruction results of thickness under the acoustic time accuracy of $10^{-9}$, $10^{-10}$ and $10^{-11}$ are respectively shown in the Table~\ref{tab.1}.
\begin{table}[H]
\centering
\begin{tabular}{llllll}
\hline
Acoustic time&initial heat flux&initial thickness&reconstructed&iterations\\
accuracy $(s)$&$q^0(\rm{{J}/{s}})$&$L^0(\rm{mm})$&thickness $L(\rm{mm})$&$\rm{n}$\\
\hline
$10^{-9}$&0&3&50.0006&138\\
$10^{-10}$&$0$&3&50.0006&64\\
$10^{-11}$&$0$&3&50.0006&63\\
$10^{-9}$&$1\times10^3$&45&50.0032&208\\
$10^{-10}$&$1\times10^3$&45&50.0028&115\\
$10^{-11}$&$1\times10^3$&45&50.0025&108\\
\hline
\end{tabular}
\caption{Convergence of the iteration method with different initial guesses and measurement errors.}
\label{tab.1}
\end{table}
It can be found that the thickness can be reconstructed effectively under different acoustics time accuracies.
The error and the number of iterations show that under the same initial value, the accuracy of acoustic time will affect the convergence speed of the algorithm. If the accuracy of acoustic time reaches $10^{-10}$ or $10^{-11}$, one can achieve much accurate reconstruction results. Thus, in the following numerical examples, we adopt the measurement data with an acoustic time accuracy of $10^{-10}$ or $10^{-11}$ to study the effect of the initial values on the inversion procedure.

Figure \ref{fig.t1} presents the reconstruction results of the heat flux under different initial thicknesses and different initial heat flux conditions. The acoustic accuracy is fixed to be $10^{-10}$.

\begin{figure}[H]
\hfill\subfigure[$ $]{\includegraphics[width=0.48\textwidth]
{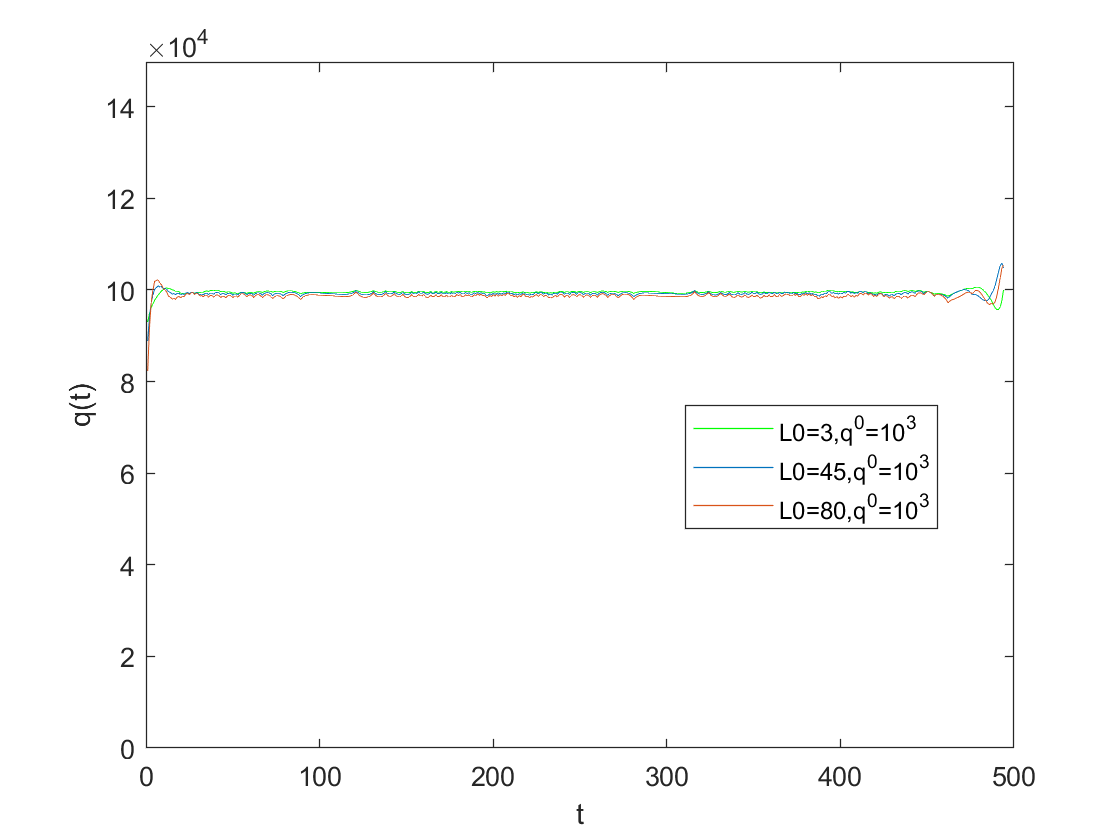}}\hfill
\hfill\subfigure[$ $]{\includegraphics[width=0.48\textwidth]
{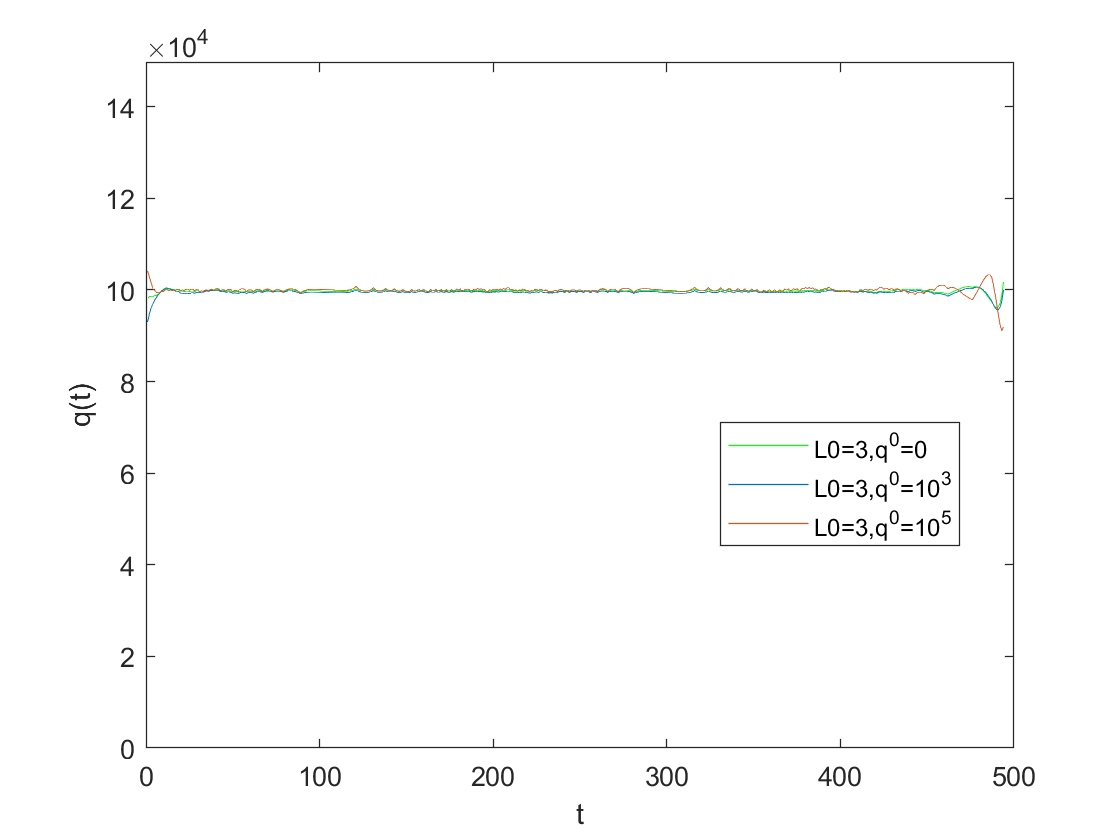}}\hfill
\caption{ \label{fig.t1} Heat flux inversion results with the acoustic time accuracy fixed to be $10^{-10}$. (a): under a fixed initial heat flux and different initial thicknesses, (b): under a fixed initial thickness and different initial heat fluxes.}
\end{figure}

Figure \ref{fig.t3} shows the reconstruction results of the heat flux under different initial thicknesses and different initial heat flux conditions. The acoustic accuracy is fixed to be $10^{-11}$.

\begin{figure}[H]
\hfill\subfigure[$ $]{\includegraphics[width=0.48\textwidth]
{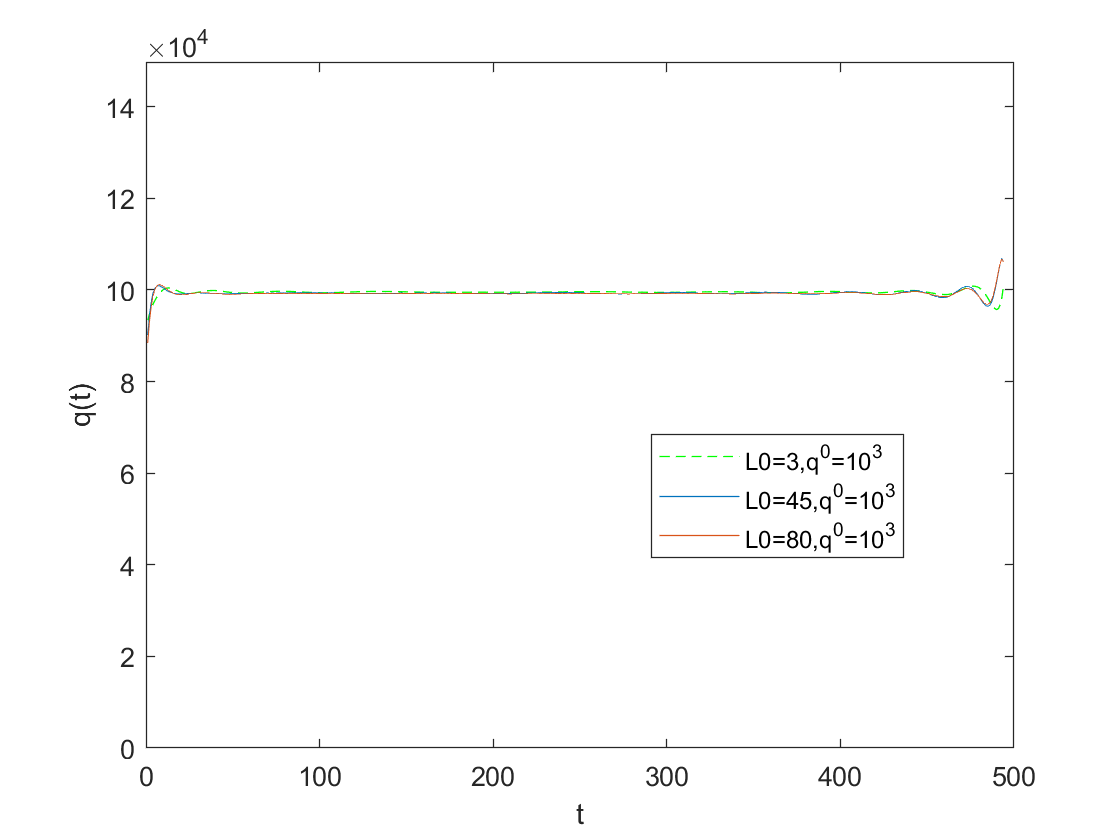}}\hfill
\hfill\subfigure[$ $]{\includegraphics[width=0.48\textwidth]
{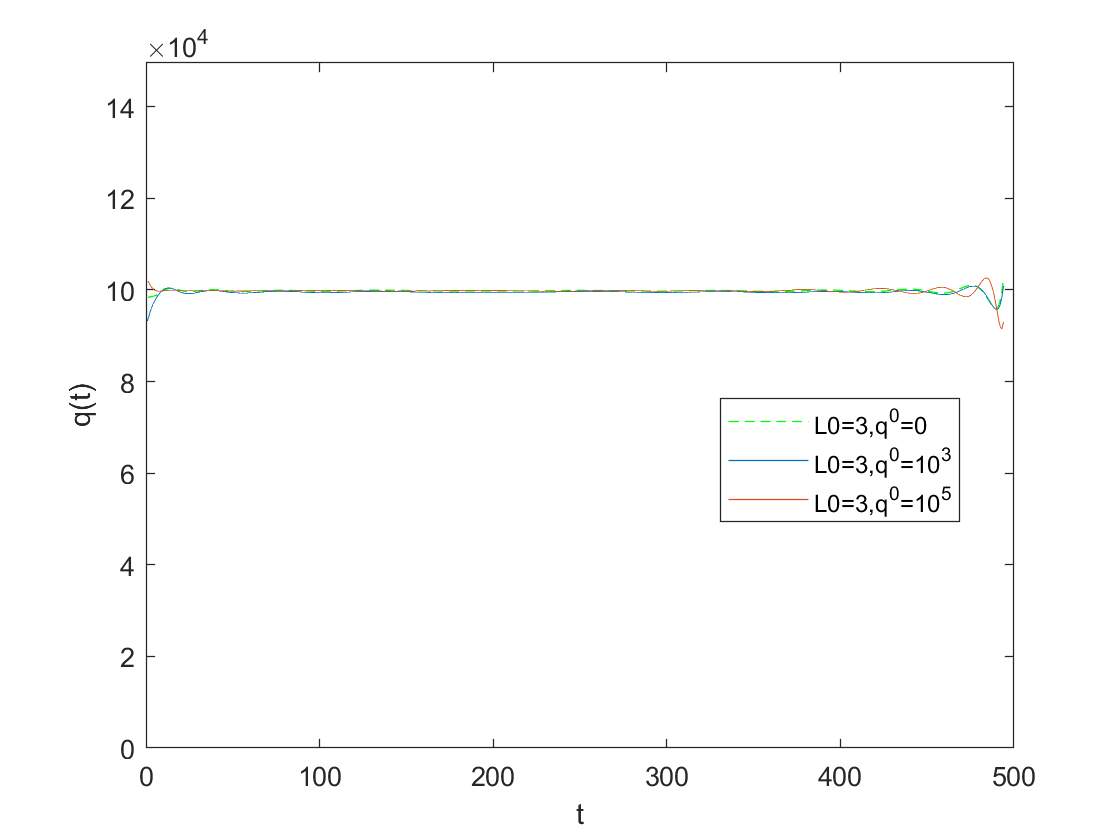}}\hfill
\caption{ \label{fig.t3} Heat flux inversion results with the acoustic time accuracy fixed to be $10^{-11}$. (a): under a fixed initial heat flux and different initial thicknesses, (b): under a fixed initial thickness and different initial heat fluxes.}
\end{figure}

By observing the surface heat flux reconstruction results in Figures \ref{fig.t1} and \ref{fig.t3}, it can be found that when the acoustic time accuracy is $10^{-10}$ and $10^{-11}$, under different initial conditions, the inversion value of the heat flux converges to the real value. Moreover, when the acoustic time accuracy reaches  $10^{-11}$, the inversion value of heat flux very close to the real value, which achieves a much accurate reconstruction.

Table~\ref{tab.2} lists the inversion results and the iteration times of the thickness under different initial thicknesses and different initial heat fluxes initial conditions. The acoustic time accuracy is $10^{-10}$ or $10^{-11}$.

\begin{table}[h]
\centering
\begin{tabular}{lllll}
\hline
acoustic time &initial heat flux&initial thickness&reconstructed thickness&iterations\\
accuracy&$q^0(\rm{{J}/{s}})$&$L^0(\rm{mm})$&$L(\rm{mm})$&$n$\\
\hline
$10^{-10}$&0&3&50.0006&64\\
$10^{-10}$&$1\times10^3$&3&50.0016&58\\
$10^{-10}$&$1\times10^5$&3&50.0000&133\\
$10^{-10}$&0&45&50.0025&126\\
$10^{-10}$&$1\times10^3$&45&50.0028&115\\
$10^{-10}$&0&80&50.0028&77\\
$10^{-10}$&$1\times10^3$&80&50.0046&92\\
$10^{-11}$&0&3&50.0006&63\\
$10^{-11}$&$1\times10^3$&3&50.0016&57\\
$10^{-11}$&$1\times10^5$&3&50.0006&147\\
$10^{-11}$&0&45&50.0025&126\\
$10^{-11}$&$1\times10^3$&45&50.0025&108\\
$10^{-11}$&0&80&50.0028&77\\
$10^{-11}$&$1\times10^3$&80&50.0029&80\\
\hline
\end{tabular}
\caption{Convergence of the proposed iteration method with different initial guesses and measurement errors.}
\label{tab.2}
\end{table}

The results show that the iterative algorithm converges very fast and robust with different initial conditions.
\section{Conclusion}

Based on the ultrasonic echo method and the inverse problem analysis method of the heat conduction, combined with the optimization model, a method of simultaneously reconstructing the thickness and the surface heat flux of a solid structure is established in this paper. This approach is non-destructive and non-contact and it can be used to recover the surface heat flux and the wall thickness at the same time. It possesses a high engineering value. We provide a rigorous convergence analysis of the proposed numerical scheme. By using experimental lab data, we conducted extensive numerical experiments to verify the effectiveness and efficiency of the newly developed method. It is found that as long as the acoustic time accuracy reaches $10^{-10}$ or $10^{-11}$, the proposed iteration method converges very fast and robust with respect to different initial guesses.

\section*{Acknowledgement}

 The work of H Liu was supported by a startup fund from City University of Hong Kong and the Hong Kong RGC General Research Fund (projects 12301420, 12302919, 12301218).

%
%
%
%
%
%
%
%
%
%

\end{document}